\documentclass[10pt,reqno]{amsart}
\usepackage{floatflt}
\usepackage{xypic}
\usepackage{rotating}
\usepackage{bbm}
\usepackage{mathrsfs}
\usepackage{diagbox}
\usepackage{dutchcal}
\usepackage{extarrows}
\usepackage{cite}
\usepackage{amsfonts} 
\usepackage[dvipsnames,usenames]{color}
\usepackage{graphicx,latexsym,bm,amsmath,amssymb,verbatim,multicol,lscape}
\makeatletter
\@namedef{subjclassname@2020}{\textup{2020} Mathematics Subject Classification}
\makeatother
\newtheorem{theorem}{Theorem} [section]

\newtheorem{conjecture}[theorem]{Conjecture}
\newtheorem{lemma}[theorem]{Lemma}

\numberwithin{equation}{section}

\def\bew{\begin{widetext}}
\def\eew{\end{widetext}}
\def\be{\begin{equation}}
\def\ee{\end{equation}}
\def\bea{\begin{eqnarray}}
\def\eea{\end{eqnarray}}

\allowdisplaybreaks
\begin{document}
\title{Proofs of Lupu's conjectures for multiple zeta values and multiple $t$-values}
\begin{abstract}
Let $r\ge 1$ be an integer. For any multiple index $\mathbf{s}=(s_1,s_2,\cdots,s_r)
\in\mathbb{Z}_{\geq 1}^r$ with $s_r\textgreater 1$, the multiple zeta value
(MZV for short) is defined by
\begin{align*}
\zeta(s_1,s_2,\cdots,s_r):=\sum_{1\leq k_1<k_2<\cdots<k_r}
\frac{1}{k_1^{s_1}k_2^{s_2}\cdots k_r^{s_r}}
\end{align*}
and the multiple $t$-value is defined by
\begin{align*}
t(s_1,s_2,...,s_r):=\sum_{1\leq k_1<k_2<...<k_r}
\frac{1}{(2k_1-1)^{s_1}(2k_2-1)^{s_2}...(2k_r-1)^{s_r}},
\end{align*}
where if the index is empty, then we define the value $t(\emptyset):=1$.
We denote by $\{a_1,\cdots,a_k\}^d$ the sequence formed by repeating the sequence
$\{a_1,\cdots,a_k\}$ exactly $d$ times. Let $H(a,b)=\zeta(\{2\}^a,3,\{2\}^b)$
and $T(a,b):=t(\{2\}^a,3,\{2\}^b)$. In this paper, by using the Lai-Lupu-Orr
integral expressions for $H(a,b)$ and $T(a,b)$ and the properties of
Beta function and Gamma function, we show that for any nonnegative integers
$a$ and $b$, we have
\begin{align*}
H(a,b):=\frac{-4\pi^{2a+2b+2}}{(2a+2)!}\sum_{n=0}^{\infty}
\frac{\zeta(2n)}{(2n+2a+2)(2n+2a+3)\cdots(2n+2a+2b+3)2^{2n}}
\end{align*}
and
\begin{align*}
T(a,b)=\frac{-2}{(2a+1)!}\left(\frac{\pi}{2}\right)^{2a+2b+2}
\sum_{n=0}^{\infty}\frac{\zeta(2n)}{(2n+2a+1)(2n+2a+2)\cdots(2n+2a+2b+2)2^{2n}}.
\end{align*}
This confirms two conjectures of Lupu proposed in [C. Lupu, Another look at Zagier's
formula for multiple zeta values involving Hoffman elements, Math. Z. 301 (2022), 3127-3140].
\end{abstract}

\author[W.Z. Lei]{Wenzhong Lei}
\address{Mathematical College, Sichuan University,
Chengdu 610064, P.R. China}
\email{lwzh1729@163.com}
\author[J.M. Yu]{Jinmin Yu}
\address{Mathematical College, Sichuan University,
Chengdu 610064, P.R. China}
\email{jmyumath@163.com}
\author[S.F. Hong]{Shaofang Hong\textsuperscript{*}}
\address{Mathematical College, Sichuan University,
Chengdu 610064, P.R. China}
\email{sfhong@scu.edu.cn}
\thanks{$^*$S.F. Hong is the corresponding author and
was supported partially by National Science Foundation
of China \#12571007.}
\subjclass[2020]{Primary 11M06, 11M32; Secondary 11B65, 11B68}
\keywords{Multiple zeta values, Zagier's formula, Multiple $t$-values,
 Lupu's conjecture.}
\maketitle

\section{Introduction}
The systematic study of multiple zeta values (MZVs) started
in the early 1990s with the work of Hoffman \cite{[H-PJM]}
and Zagier \cite{[Z-FECM]}. Let $r\ge 1$ be an integer. For any multiple index
$\mathbf{s}=(s_1,s_2,\cdots,s_r)\in \mathbb{Z}_{\geq 1}^r$ with
$s_r\textgreater 1$, the {\it multiple zeta value (MZV for short)}
is defined by
\begin{align*}
\zeta(s_1,s_2,\cdots,s_r):=\sum_{1\leq k_1<k_2<\cdots<k_r}
\frac{1}{k_1^{s_1}k_2^{s_2}\cdots k_r^{s_r}}.
\end{align*}
The {\it weight} of MZV is defined to be the number
$|\mathbf{s}|:=s_1+\cdots+s_r$, and its {\it depth} is defined
by $l(\mathbf{s}):=r$. For brevity, we let $\{a_1,\cdots,a_k\}^d$
denote the sequence formed by repeating the sequence
$\{a_1,\cdots,a_k\}$ exactly $d$ times. The simplest precise
evaluations of MZVs and MZSVs are given by
\begin{eqnarray*}
\zeta(\{2\}^d)=\frac{\pi^{2d}}{(2d+1)!}.
\end{eqnarray*}

A central problem in MZV is to investigate the algebraic structure
of the $\mathbb{Q}$-vector space $\mathfrak{Z}_k$ spanned by MZVs
of weight $k$. In this direction, Hoffman \cite{[H-JA]} proposed a
conjecture that stats that every MZV of weight $k$ can be expressed as a
$\mathbb{Q}$-linear combination of those MZVs whose entries are only
$2$'s and $3$'s. This conjecture was proved by Brown \cite{[B-ANN]}
using motivic method. In particular, Brown showed that
$H(a,b):=\zeta(\{2\}^a,3,\{2\}^b)$ can be expressed as a $\mathbb{Q}$-linear
combination of the products $\pi^{2m}\zeta(2n+1)$ with $m+n=a+b+1$.
In 2012, Zagier \cite{[Z-AM]} found surprisingly an elegant and explicit
formula as follows.

\begin{theorem}\label{theorem1.01}
(Zagier \cite{[Z-AM]}) For non-negative integers $a$ and $b$,
\begin{align}
H(a,b)=2\sum_{k=1}^{a+b+1}(-1)^kc_{a,b}\zeta(2k+1)
\zeta(\{2\}^{a+b+1-k}),
\end{align}
where
$$c_{a,b}:=\binom{2k}{2a+2}-\Big(1-\frac{1}{2^{2k}}\Big)\binom{2k}{2b+1}.$$
\end{theorem}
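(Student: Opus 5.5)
We follow Zagier's generating-function strategy rather than manipulating the nested sums directly. Put
\begin{align*}
F(x,y)=\sum_{a,b\ge 0}(-1)^{a+b+1}H(a,b)\,x^{2a+2}y^{2b+1}.
\end{align*}
Let $\hat F(x,y)$ be the analogous generating function of the right-hand side of Theorem \ref{theorem1.01}. The theorem is equivalent to $F=\hat F$ as power series. Using $\zeta(\{2\}^d)=\pi^{2d}/(2d+1)!$, the series $\hat F$ can be summed in closed form. The binomials $\binom{2k}{2a+2}$ and $\binom{2k}{2b+1}$ recombine $x$ and $y$ into $(x\pm y)^{2k}$. This expresses $\hat F$ through $\sin(\pi x)$ and $\sin(\pi y)$ together with the digamma-type function
\begin{align*}
A(z)=\sum_{k\ge1}\zeta(2k+1)z^{2k}=\sum_{n\ge1}\Big(\frac1n-\frac{n}{n^2-z^2}\Big),
\end{align*}
evaluated at $z=x\pm y$, and similarly at $(x\pm y)/2$. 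The latter arguments account for the factor $1-2^{-2k}$.

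For $F$, we use the product formula $\sum_{d}(-1)^d\zeta(\{2\}^d)x^{2d}=\prod_{n}(1-x^2/n^2)=\sin(\pi x)/(\pi x)$. Splitting the defining sum of $H(a,b)$ at the position $m$ of the exponent $3$ writes $F$ as
\begin{align*}
F(x,y)=-\sum_{m\ge1}\frac{xy}{m^3}\prod_{n<m}\Big(1-\frac{x^2}{n^2}\Big)\prod_{n>m}\Big(1-\frac{y^2}{n^2}\Big),
\end{align*}
up to a harmless normalisation. Each summand is a ratio of Gamma functions. Hence $F$ is a hypergeometric sum, essentially $\frac{\sin\pi y}{\pi}$ times a ${}_3F_2(x,-x,1;1+y,1-y;1)$-type series. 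In particular $F$ extends to an entire function of $(x,y)$ of controlled (exponential) growth.

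The core of the proof is to show $F-\hat F\equiv 0$ by an analytic uniqueness argument.
\begin{enumerate}
\item Fix $x$ and view both sides as functions of $y$. Compute their behaviour under $y\mapsto y+1$, or equivalently their poles and residues once the $\sin(\pi y)$ factor is removed. The hypergeometric description gives this for $F$, and $\hat F$ has it explicitly.
\item Show that the two sides have the same behaviour under this shift, so that the difference, divided by the appropriate sine, is periodic and entire.
\item Bound this periodic entire function as $|\mathrm{Im}\,y|\to\infty$, so that Liouville's theorem forces it to be constant.
\item Show the constant vanishes by comparing the coefficient of $y^1$, i.e. the case $b=0$. Here $H(a,0)=\zeta(\{2\}^a,3)$ is handled by direct partial fractions.
\end{enumerate}

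The main obstacle is the matching of the two sides and the growth estimate for the ${}_3F_2$-type sum. This requires a three-term contiguous relation (or Dougall/Whipple-type transformation) for the hypergeometric series. I would first try to verify the shift relation by telescoping in $m$ directly in the product representation above, before resorting to hypergeometric identities.
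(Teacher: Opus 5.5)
\textbf{There is a genuine gap: steps 1--2 are the theorem, and the proposal does not carry them out.} The framework is the right one; it is the one the paper attributes to Zagier, since the paper only cites this theorem. Your description of $\hat F$ is also correct. Summing gives $\hat F(x,y)=\frac{\sin\pi y}{\pi}\bigl(A(x+y)+A(x-y)-2A(y)\bigr)-\frac{\sin\pi x}{\pi}\bigl(B(x+y)-B(x-y)\bigr)$, where $B(z)=A(z)-A(z/2)$ and $A(z)=\sum_{n\ge1}\bigl(\frac{n}{n^2-z^2}-\frac1n\bigr)$; your partial-fraction form of $A$ has the opposite sign. But the matching step, which you yourself flag as the main obstacle, is exactly where the content lies, and the rest is routine.

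Concretely, with the correct prefactor $x^2y$ (not $xy$) one gets $F(x,y)=\frac{\sin\pi y}{\pi}\Phi(x,y)$, where $\Phi(x,y)=\sum_{m\ge1}\frac{(x)_m(-x)_m}{m\,(1+y)_m(1-y)_m}$. The factor $1/m$ is essential. Without it the sum is ${}_3F_2(x,-x,1;1+y,1-y;1)-1$, and that ${}_3F_2$ telescopes to the elementary function $\bigl(y^2-xy\frac{\sin\pi x}{\sin\pi y}\bigr)/(y^2-x^2)$, which contains no odd zeta values. So calling $F$ a series of that type hides the whole difficulty.

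In your setting ($x$ fixed, $y$ varying), matching the poles of $\Phi$ and of $\pi\hat F/\sin\pi y$ at $y=n\ge1$ is equivalent to
\[
(-1)^n\frac{(x)_n(-x)_n}{(2n)!}\,{}_3F_2(n+x,n-x,n;n+1,2n+1;1)=1-\frac{(-1)^n\sin\pi x}{\pi}\bigl(B(x+n)-B(x-n)\bigr),\qquad n\ge1.
\]
This is an infinite family of nonterminating hypergeometric evaluations. For $n=1$ it follows from Gauss's summation: both sides equal $1-\frac{\sin\pi x}{\pi x(1-x^2)}$. For general $n$ the proposal gives no argument, and ``first try telescoping in $m$'' is a plan, not a proof.

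\textbf{The surrounding steps also need repair.} First, the shift $y\mapsto y+1$ moves the arguments of $A\bigl(\frac{x\pm y}{2}\bigr)$ by $\frac12$. Since $A(w+\frac12)-A(w)$ is not rational, $\hat F$ has no usable difference equation under this shift. You need $y\mapsto y+2$. Then $\Phi(x,y+2)-\Phi(x,y)$ must produce, besides a rational function, a term $\frac{\sin\pi x}{\sin\pi y}R(x,y)$ with $R$ rational, to match the $\sin\pi x$ part of $\hat F$. In a telescoping argument this term would come from the boundary, since $m^3$ times the summand of $\Phi$ tends to $-xy\sin\pi x/\sin\pi y$.

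Second, ``divided by the appropriate sine, periodic and entire'' mixes two different arguments.
\begin{itemize}
\item Matching shifts makes $F-\hat F$ itself periodic. You cannot then divide by $\sin\pi y$, because $F-\hat F$ vanishing at the integers is exactly the residue identity above.
\item Matching residues makes $(F-\hat F)/\sin\pi y$ entire, but not periodic.
\end{itemize}
Either version can be closed by growth estimates, but only after the displayed identity, or its shift analogue, has been proved.

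Third, this is why the route the paper summarizes works. The natural ``sufficiently many points'' are $x=n\in\mathbb{Z}$, where $(-x)_m$ truncates $\Phi$. There the required identity becomes the finite rational identity $\sum_{m=1}^{n}\frac{(n)_m(-n)_m}{m(1+y)_m(1-y)_m}=-\sum_{j=1}^{n-1}\frac{2j}{j^2-y^2}-\frac{n}{n^2-y^2}$. Working in $y$, as you do, keeps every series nonterminating.
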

\noindent The proof of Theorem \ref{theorem1.01} is given by computing the associated
generating functions of both sides in a closed form, and then showing
they are entire functions of exponential growth that agree at
sufficiently many points to force their equality. Later on, his proof
was simplified by Li \cite{[L-MRL]}. In 2017, Hessami Pilehrood
and Hessami Pilehrood \cite{[HHT-JMAA]} gave an alternative proof of Theorem
\ref{theorem1.01}.

In a similar way, Hoffman \cite{[H-CNTP]} defined the multiple
$t$-values  (``odd variant'' of MZVs). For any multiple index
$\mathbf{s}=(s_1,s_2,...,s_r)\in \mathbb{Z}_{\geq 1}^r$, the
{\it multiple $t$-value } is defined by
\begin{align*}
t(s_1,s_2,...,s_r):=\sum_{1\leq k_1<k_2<...<k_r}
\frac{1}{(2k_1-1)^{s_1}(2k_2-1)^{s_2}...(2k_r-1)^{s_r}},
\end{align*}
where if the index is empty, then we define the value $t(\emptyset):=1$.
As we stated above, first let us recall the the analogue formula
for the simplest multiple zeta value (\cite{[MK-PSPM]},\cite{[Z-FM]})
\begin{align*}
t(\mathop{\underbrace{2,2,...,2}}_{n})=\frac{\pi^{2n}}{2^{2n}(2n)!}.
\end{align*}

By the method of establishing the generating functions, Murakami \cite{[M-MA]}
obtained an equivalent version of Theorem \ref{theorem1.01}. Define
$$\widetilde{t}(k_1,k_2,...,k_r):=2^{|k|}t(k_1,k_2,...,k_r)$$
and set
$$K(a,b):=\widetilde{t}(\{2\}^a,3,\{2\}^b),K(n)=\widetilde{t}(\{2\}^n)$$
for any $a,b,n\in \mathbb{N}_{\geq 0}$. Then for all integers $a,b\geq 0$,
Murakami \cite{[M-MA]} provided the following formula
\begin{align}\label{eq1.1}
K(a,b)=\sum_{k=1}^{a+b+1}(-1)^{k-1}d_{a,b}K(a+b-k+1)\zeta(2k+1),
\end{align}
where
$$d_{a,b}:=\binom{2k}{2a+1}+\binom{2k}{2b+1}\Big(1-\frac{1}{2^{2k}}\Big).$$
Moreover, let $T(a,b):=t(\{2\}^a,3,\{2\}^b)$. Then \eqref{eq1.1} is equivalent to
\begin{align}
T(a,b)=2\sum_{k=1}^{r+s+1}(-1)^{k-1}d_{a,b}\frac{1}{2^{2k}}\zeta(2k+1)
t(\{2\}^{a+b-k+1}),
\end{align}

In a different approach, Lupu \cite{[L-MZ]} provided elementary proofs of
Zagier's formula for the special case $b=0$, and also established a Zagier-type
formula for multiple $t$-values when $b=0$. For the general case $b>0$, Lupu
\cite{[L-MZ]} proposed the following two conjectures.

\begin{conjecture}\label{con1}
For nonnegative integers $a$ and $b$, we have
\begin{align}
H(a,b)=\frac{-4\pi^{2a+2b+2}}{(2a+2)!}\sum_{n=0}^{\infty}
\frac{\zeta(2n)}{(2n+2a+2)(2n+2a+3)\cdots(2n+2a+2b+3)2^{2n}}.
\end{align}
\end{conjecture}

\begin{conjecture}\label{con2}
For nonnegative integers $a$ and $b$, we have
\begin{align}
T(a,b)=\frac{-2}{(2a+1)!}\left(\frac{\pi}{2}\right)^{2a+2b+2}
\sum_{n=0}^{\infty}\frac{\zeta(2n)}{(2n+2a+1)(2n+2a+2)\cdots(2n+2a+2b+2)2^{2n}}.
\end{align}
\end{conjecture}
\noindent Notice that Conjectures 1.2 and 1.3 are shown in \cite{[L-MZ]}
to be true when $b=0$. But for the general case $b>0$, these two conjectures
are still kept open so far.

In 2026, Lai, Lupu, and Orr \cite{[LLO-PAMS]} gave elementary and direct proofs
for both $H(a, b)$ and $T(a, b)$. In fact, they \cite{[LLO-PAMS]} first provided
an integral expression for $H(a, b)$, and then showed that this integral expression
is equal to Zagier's result. The same proof method was applied to $T(a, b)$.

In this paper, our main goal is to investigate Conjectures \ref{con1} and \ref{con2}.
We will prove that both conjectures of Lupu are true. Actually, by employing the integral
expressions presented in \cite{[LLO-PAMS]} for $H(a,b)$ and $T(a,b)$ and using
the properties of Beta and Gamma functions, we show the following main results
of this paper.

\begin{theorem}\label{theorem1.1}
For nonnegative integers $a$ and $b$, we have $H(a,b)=L(a,b)$, where
\begin{align*}
L(a,b):=\frac{-4\pi^{2a+2b+2}}{(2a+2)!}\sum_{n=0}^{\infty}
\frac{\zeta(2n)}{(2n+2a+2)(2n+2a+3)\cdots(2n+2a+2b+3)2^{2n}}.
\end{align*}
\end{theorem}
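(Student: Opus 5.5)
The plan is to convert the series $L(a,b)$ into a single trigonometric integral and then identify that integral with the Lai--Lupu--Orr integral representation of $H(a,b)$ from \cite{[LLO-PAMS]}. There are three steps: a Beta-function identity, summation of the $\zeta(2n)$ series through the cotangent expansion, and matching with the LLO integral.

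\emph{Step 1: Beta-function identity.} The denominator of the $n$-th term is a product of $2b+2$ consecutive integers starting at $2n+2a+2$. Hence
\begin{align*}
\frac{1}{(2n+2a+2)(2n+2a+3)\cdots(2n+2a+2b+3)}
=\frac{\Gamma(2n+2a+2)}{\Gamma(2n+2a+2b+4)}
=\frac{1}{(2b+1)!}\,B(2n+2a+2,\,2b+2),
\end{align*}
and therefore
\begin{align*}
\frac{1}{(2n+2a+2)(2n+2a+3)\cdots(2n+2a+2b+3)}
=\frac{1}{(2b+1)!}\int_0^1 u^{2n+2a+1}(1-u)^{2b+1}\,du .
\end{align*}

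\emph{Step 2: summing the series.} Insert the Step 1 identity into $L(a,b)$ and interchange sum and integral. This is justified because $|\zeta(2n)|2^{-2n}u^{2n}\le \zeta(2)4^{-n}$ for $n\ge1$, so the series converges uniformly on $[0,1]$. Next use the classical expansion
$$\sum_{n\ge0}\zeta(2n)x^{2n}=-\tfrac{\pi x}{2}\cot(\pi x)\qquad(|x|<1),$$
with $\zeta(0)=-\tfrac12$, at $x=u/2$. This gives
\begin{align*}
L(a,b)=\frac{4\pi^{2a+2b+2}}{(2a+2)!\,(2b+1)!}\int_0^1 u^{2a+1}(1-u)^{2b+1}\,\frac{\pi u}{4}\cot\Big(\frac{\pi u}{2}\Big)\,du .
\end{align*}
Substituting $\theta=\pi u/2$ turns this into a constant multiple of
$$\int_0^{\pi/2}\theta^{2a+2}\Big(\frac{\pi}{2}-\theta\Big)^{2b+1}\cot\theta\,d\theta.$$
If needed, the reflection $\theta\mapsto \pi/2-\theta$ gives the equivalent form with $\theta^{2b+1}(\pi/2-\theta)^{2a+2}\tan\theta$. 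I will keep careful track of the constant: the powers of $\pi$ and $2$ from the substitution should collapse into a rational multiple of $1/((2a+2)!(2b+1)!)$.

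\emph{Step 3: matching with LLO (main obstacle).} The remaining task is to show that this integral equals the integral expression for $H(a,b)$ in \cite{[LLO-PAMS]}. That expression is most likely an integral over $[0,\pi/2]$ of a polynomial weight in $\theta$ against $\cot\theta$, $\tan\theta$, or $\log(\sin\theta)$/$\log(\cos\theta)$. Since $\frac{d}{d\theta}\log\sin\theta=\cot\theta$, I would first try integrating by parts once; the boundary terms vanish because the weight $\theta^{2a+2}(\pi/2-\theta)^{2b+1}$ kills the logarithmic singularity at $\theta=0$ and $\log\sin(\pi/2)=0$. I would then compare with the LLO form, splitting the weight into binomial pieces or using the reflection above if the forms do not agree directly. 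As a consistency check, at $b=0$ the resulting identity should reduce to Lupu's known proof of the $b=0$ case of Conjecture \ref{con1}. I expect this identification step to be the main difficulty, since it depends on the exact shape of the LLO kernel. If direct matching fails, the fallback is to expand both sides in the basis $\pi^{2m}\zeta(2k+1)$ via $\int_0^{\pi/2}\theta^{j}\log(2\sin\theta)\,d\theta$ and compare with Theorem \ref{theorem1.01}.
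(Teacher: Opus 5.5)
Your Steps 1--2 are the paper's own computation run in reverse, so the approach is the same and correct: the paper starts from the Lai--Lupu--Orr formula (Lemma~\ref{lemma2.1}), $H(a,b)=\frac{\pi^{2b}2^{2a+3}}{(2a+2)!(2b+1)!}\int_0^{\pi/2}x^{2a+2}\bigl(1-\frac{2}{\pi}x\bigr)^{2b+1}\cot x\,dx$. It then expands $x\cot x=-2\sum_{n\ge0}\zeta(2n)x^{2n}/\pi^{2n}$, integrates term by term, and evaluates each term through $B(2n+2a+2,2b+2)$. Carrying your constant through $\theta=\pi u/2$ gives exactly $L(a,b)=\frac{\pi^{2b}2^{2a+3}}{(2a+2)!(2b+1)!}\int_0^{\pi/2}\theta^{2a+2}\bigl(1-\frac{2\theta}{\pi}\bigr)^{2b+1}\cot\theta\,d\theta$, which is verbatim that formula. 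So your Step 3 is no obstacle, and the integration by parts and the fallback to Zagier's formula are unnecessary.
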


\begin{theorem}\label{theorem1.2}
For nonnegative integers $a$ and $b$, we have $T(a,b)=\hat L(a,b)$, where
\begin{align*}
\hat L(a,b):=\frac{-2}{(2a+1)!}\left(\frac{\pi}{2}\right)^{2a+2b+2}
\sum_{n=0}^{\infty}\frac{\zeta(2n)}{(2n+2a+1)(2n+2a+2)\cdots(2n+2a+2b+2)2^{2n}}.
\end{align*}
\end{theorem}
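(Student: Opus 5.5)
The plan is to turn $\hat L(a,b)$ into a single trigonometric integral and then identify that integral with the Lai--Lupu--Orr integral expression for $T(a,b)$ from \cite{[LLO-PAMS]}. The first step uses the Beta function:
\begin{align*}
\frac{1}{(2n+2a+1)(2n+2a+2)\cdots(2n+2a+2b+2)}=\frac{1}{(2b+1)!}\int_0^1 u^{2n+2a}(1-u)^{2b+1}\,du .
\end{align*}
This holds because the left side equals $\Gamma(2n+2a+1)/\Gamma(2n+2a+2b+3)$, which is $B(2n+2a+1,2b+2)/\Gamma(2b+2)$.

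Substituting this into $\hat L(a,b)$, I would interchange sum and integral. This is legitimate because $|\zeta(2n)(u/2)^{2n}|\le \zeta(2)4^{-n}$ for $n\ge1$ and $u\in[0,1]$, so the series converges uniformly. I would then use the classical generating function
\begin{align*}
\sum_{n=0}^{\infty}\zeta(2n)x^{2n}=-\frac{\pi x}{2}\cot(\pi x),\qquad |x|<1,
\end{align*}
which uses $\zeta(0)=-\tfrac12$. At $x=u/2\in[0,\tfrac12]$ the right side is regular, so no boundary issues arise. This gives
\begin{align*}
\sum_{n\ge0}\frac{\zeta(2n)}{(2n+2a+1)\cdots(2n+2a+2b+2)2^{2n}}=-\frac{\pi}{4(2b+1)!}\int_0^1 u^{2a+1}(1-u)^{2b+1}\cot\Big(\frac{\pi u}{2}\Big)du .
\end{align*}

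Next I would change variables with $\theta=\pi u/2$. Then $u^{2a+1}(1-u)^{2b+1}=(2/\pi)^{2a+2b+2}\theta^{2a+1}(\tfrac{\pi}{2}-\theta)^{2b+1}$ and $du=(2/\pi)\,d\theta$. The constants combine as $(-2)\cdot(-\pi/4)\cdot(2/\pi)=1$, and the powers $(\pi/2)^{2a+2b+2}$ cancel exactly. So I expect to arrive at
\begin{align*}
\hat L(a,b)=\frac{1}{(2a+1)!\,(2b+1)!}\int_0^{\pi/2}\theta^{2a+1}\Big(\frac{\pi}{2}-\theta\Big)^{2b+1}\cot\theta\,d\theta .
\end{align*}
The analogous computation for $L(a,b)$ uses $B(2n+2a+2,2b+2)$ and $\theta=\pi u/2$. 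It should give the companion integral $\frac{c}{(2a+2)!(2b+1)!}\int_0^{\pi/2}\theta^{2a+2}(\frac{\pi}{2}-\theta)^{2b+1}\cot\theta\,d\theta$, with an explicit power of $2$ in the constant $c$.

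The main obstacle is the final identification with the integral expression of \cite{[LLO-PAMS]} for $T(a,b)$. That expression is presumably obtained by iterating the $t(\{2\}^n)$ generating function $\cos$-type kernels. If it appears in exactly the form above, the proof is finished. If instead it is written as a double integral, or with $\tan$ and the roles of $\theta$ and $\frac{\pi}{2}-\theta$ swapped, I would first try the reflection $\theta\mapsto\frac{\pi}{2}-\theta$, which exchanges $\cot$ and $\tan$. If that is not enough, I would integrate by parts using $\frac{d}{d\theta}\log\sin\theta=\cot\theta$, and then collapse any inner integral with the Beta function again.

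As a fallback, I would expand $\cot\theta=\frac1\theta-2\sum_{k\ge1}\frac{\theta}{k^2\pi^2-\theta^2}$, or use the Fourier series $\log(2\sin\theta)=-\sum_{m\ge1}\frac{\cos 2m\theta}{m}$. Then I would evaluate the polynomial moments $\int_0^{\pi/2}\theta^{2a+1}(\frac{\pi}{2}-\theta)^{2b+1}\cos 2m\theta\,d\theta$ by repeated integration by parts. This should produce exactly Murakami's $\zeta(2k+1)$-combination \eqref{eq1.1}, with coefficients $d_{a,b}$ coming from the boundary terms at $0$ and $\pi/2$. The factor $(1-2^{-2k})$ should arise from the alternating signs $\cos(m\pi)=(-1)^m$ at $\theta=\pi/2$.
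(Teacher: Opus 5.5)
Your proposal is correct and is the paper's proof read in the opposite direction. The paper starts from the Lai--Lupu--Orr integral of Lemma~\ref{lemma2.2}, expands $x\cot x=-2\sum_{n\ge0}\zeta(2n)x^{2n}/\pi^{2n}$, and evaluates each term with $B(2n+2a+1,2b+2)$, which are exactly your ingredients. Your ``main obstacle'' does not arise: that lemma states $T(a,b)=\frac{\pi^{2b+1}}{2^{2b+1}(2a+1)!(2b+1)!}\int_0^{\pi/2}x^{2a+1}\bigl(1-\frac{2x}{\pi}\bigr)^{2b+1}\cot x\,dx$, which is literally your integral because $\bigl(\frac{\pi}{2}\bigr)^{2b+1}\bigl(1-\frac{2x}{\pi}\bigr)^{2b+1}=\bigl(\frac{\pi}{2}-x\bigr)^{2b+1}$, so none of the fallback routes are needed.
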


This paper is organized as follows. In Section 2, we supply several preliminary lemmas
that are needed in the proofs of our main results. Sections 3 and 4 are devoted to
the proofs of Theorems \ref{theorem1.1} and \ref{theorem1.2}, respectively.

\section{Preliminaries}

In this section, we give some preliminary lemmas that are needed in the proofs
of our main results.

As usual, for any real numbers $r$ and $s$, we define the {\it Beta function},
denoted by $B(r,s)$, and the {\it Gamma function}, denoted by $\Gamma(r)$, as follows:
$$
B(r,s)=\int_{0}^{1} x^{r-1}(1-x)^{s-1}\mathrm{d}x
$$
and
$$
\Gamma(r)=\int_{0}^{\infty}e^{-x} x^{r-1}\mathrm{d}x.
$$
See, for instance, Definition 1.1.1 of \cite{[AAR]}. It is well known that
the Beta function and the Gamma function hold the following properties.

\begin{lemma}\label{lemma2.0} (Theorem 1.1.4 of \cite{[AAR]})
We have
$$
B(r,s)=\frac{\Gamma(r)\Gamma(s)}{\Gamma(r+s)}
$$
and
$$
\Gamma(r)=(r-1)!,
$$
where $r$ an $s$ are positive integers.
\end{lemma}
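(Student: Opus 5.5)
The statement is Lemma \ref{lemma2.0}: for positive integers $r,s$ we have $B(r,s)=\Gamma(r)\Gamma(s)/\Gamma(r+s)$ and $\Gamma(r)=(r-1)!$. Since only positive integers are involved, I will prove both parts with elementary integration by parts and avoid the general Fubini argument.

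First, the factorial formula. A direct computation gives $\Gamma(1)=\int_0^\infty e^{-x}\,\mathrm{d}x=1$. For $r\ge 1$, integrate by parts with $u=x^{r}$ and $\mathrm{d}v=e^{-x}\mathrm{d}x$. The boundary term $[-x^{r}e^{-x}]_0^\infty$ vanishes, because $x^re^{-x}\to 0$ at infinity and $x^r=0$ at $0$. This yields $\Gamma(r+1)=r\Gamma(r)$, and induction on $r$ then gives $\Gamma(r)=(r-1)!$.

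Second, the Beta identity. By the first part it is equivalent to $B(r,s)=\frac{(r-1)!(s-1)!}{(r+s-1)!}$. I will prove this by induction on $s$ with $r$ arbitrary.
\begin{itemize}
\item Base case $s=1$: $B(r,1)=\int_0^1x^{r-1}\mathrm{d}x=1/r=\frac{(r-1)!\,0!}{r!}$.
\item Inductive step, $s\ge 1$: in $B(r,s+1)=\int_0^1 x^{r-1}(1-x)^{s}\,\mathrm{d}x$, integrate by parts with $\mathrm{d}v=x^{r-1}\mathrm{d}x$ and $u=(1-x)^s$. The boundary term $\bigl[\tfrac{x^r}{r}(1-x)^s\bigr]_0^1$ vanishes since $s\ge1$ and $r\ge1$. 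This gives the reduction $B(r,s+1)=\frac{s}{r}B(r+1,s)$.
\item Applying the induction hypothesis to $B(r+1,s)$ gives
\[
B(r,s+1)=\frac{s}{r}\cdot\frac{r!\,(s-1)!}{(r+s)!}=\frac{(r-1)!\,s!}{(r+s)!},
\]
which is the claim for $(r,s+1)$.
\end{itemize}

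The only point that needs any care is that the induction on $s$ must allow $r$ to vary. This is why the statement is quantified over all $r$ at each stage of the induction. Beyond that there is no real obstacle: the lemma is standard and can simply be cited from \cite{[AAR]}.
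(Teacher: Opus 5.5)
Your proof is correct. The boundary terms vanish as you say ($r\ge 1$ at $0$, $s\ge 1$ at $1$), the reduction $B(r,s+1)=\frac{s}{r}B(r+1,s)$ is right, and quantifying the induction hypothesis over all $r$ is exactly what justifies passing from $B(r+1,s)$ to $B(r,s+1)$.

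The paper gives no argument of its own; it only cites Theorem 1.1.4 of \cite{[AAR]}. There the identity $B(r,s)=\Gamma(r)\Gamma(s)/\Gamma(r+s)$ is proved for all complex $r,s$ with positive real parts by the usual double-integral argument: write $\Gamma(r)\Gamma(s)$ as an integral over the positive quadrant and substitute $t=uv$, $s=u(1-v)$, using absolute convergence to interchange the integrals. The factorial formula comes from $\Gamma(r+1)=r\Gamma(r)$, which is exactly your first part.

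Your second part is the genuinely different route. It uses only one-variable integration by parts and induction, so it is fully self-contained and avoids any interchange of integrals. The price is that it is tied to integer parameters: the induction starts at $s=1$, and the closed form uses $(r-1)!$. That costs nothing here, because the only Beta values the paper evaluates are $B(2n+2a+2,2b+2)$ and $B(2n+2a+1,2b+2)$, both with positive integer arguments. The cited general theorem proves more than the paper needs.
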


We also need the following integral expressions for $H(a,b)$ and $T(a,b)$ which
are due to Lai, Lupu and Orr \cite{[LLO-PAMS]}.

\begin{lemma}\label{lemma2.1} (Theorem 3.2 of \cite{[LLO-PAMS]})
We have
\begin{align*}
H(a,b) = \frac{\pi^{2b}2^{2a + 3}}{(2a + 2)!(2b + 1)!}\int_{0}^{\frac{\pi}{2}} x^{2a + 2}
\left(1 - \frac{2}{\pi} x\right)^{2b+1}\cot(x)\mathrm{d}x.
\end{align*}
\end{lemma}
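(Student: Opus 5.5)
Since Lemma \ref{lemma2.1} is quoted from \cite{[LLO-PAMS]}, we only indicate how one could verify it independently. The plan is to evaluate the integral on the right-hand side in closed form and then compare with Zagier's formula, Theorem \ref{theorem1.01}. The tool is the Fourier expansion $\cot x = 2\sum_{n\ge 1}\sin(2nx)$ on $(0,\pi)$. This series converges only in the Abel (or distributional) sense, so it is used as follows. Put $P(x):=x^{2a+2}(1-\frac{2}{\pi}x)^{2b+1}$. Since $P$ vanishes to order $2a+2\ge 2$ at $x=0$ and to order $2b+1\ge 1$ at $x=\pi/2$, we first integrate by parts once against $\log\sin x$, whose Fourier series $-\log 2-\sum_{n\ge1}\cos(2nx)/n$ converges boundedly. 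The boundary terms vanish, and this justifies interchanging sum and integral. We obtain
\begin{align*}
\int_0^{\pi/2}P(x)\cot x\,\mathrm{d}x=2\sum_{n\ge1}\int_0^{\pi/2}P(x)\sin(2nx)\,\mathrm{d}x .
\end{align*}

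Next substitute $x=\pi t/2$, so that $P(x)=(\pi/2)^{2a+2}t^{2a+2}(1-t)^{2b+1}$ and $\sin(2nx)=\sin(n\pi t)$. Each integral $\int_0^1 t^{2a+2}(1-t)^{2b+1}\sin(n\pi t)\,\mathrm{d}t$ is then computed by repeated integration by parts. Only boundary terms at $t=0$ and $t=1$ survive, involving $(n\pi)^{-j-1}$ times derivatives of the polynomial at the endpoints. At $t=0$ the nonzero derivatives have orders $j\ge 2a+2$. At $t=1$ they have orders $j\ge 2b+1$ and carry the sign $(-1)^n$. Because of the sine, only the terms with odd powers $n^{-(2k+1)}$ survive, since even powers pair with $\cos(n\pi t)$ evaluated at the endpoints. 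Summing over $n$ turns the $t=0$ contributions into $\zeta(2k+1)$ and the $t=1$ contributions into $\sum(-1)^n n^{-2k-1}=-(1-2^{-2k})\zeta(2k+1)$.

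The derivatives of $t^{2a+2}(1-t)^{2b+1}$ at the endpoints are binomial coefficients times factorials. After multiplying by the prefactor $\pi^{2b}2^{2a+3}/((2a+2)!(2b+1)!)$, the coefficient of $\zeta(2k+1)$ should become $2(-1)^k\big[\binom{2k}{2a+2}-(1-2^{-2k})\binom{2k}{2b+1}\big]$ times $\pi^{2(a+b+1-k)}/(2(a+b+1-k)+1)!$. This is exactly $2(-1)^k c_{a,b}\,\zeta(2k+1)\zeta(\{2\}^{a+b+1-k})$, by $\zeta(\{2\}^d)=\pi^{2d}/(2d+1)!$. Comparing with Theorem \ref{theorem1.01} then gives the lemma.

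The main obstacle is this last bookkeeping step: tracking the signs $(-1)^k$ from the alternating derivatives of $\sin(n\pi t)$, and checking that the factorials and powers of $\pi$ and $2$ collapse exactly to $c_{a,b}$ and $\zeta(\{2\}^{a+b+1-k})$. I would first check the case $b=0$, where Lupu's formula is already known, and then do the general computation using the Leibniz rule at each endpoint. At $t=0$ only the $t^{2a+2}$ factor needs to be differentiated enough to be nonzero, and at $t=1$ only the $(1-t)^{2b+1}$ factor, which keeps the sums single binomial terms.
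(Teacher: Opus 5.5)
Your argument is correct, but it does not follow the route behind the lemma. The paper gives no proof and simply imports the identity from Theorem 3.2 of \cite{[LLO-PAMS]}. There, as the introduction says, the integral is derived directly from $H(a,b)$ and only afterwards shown to agree with Zagier's formula. You instead assume Theorem \ref{theorem1.01} and evaluate the integral, i.e.\ you run the second half of that argument in reverse.

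The bookkeeping you single out as the main obstacle does close. With $Q(t)=t^{2a+2}(1-t)^{2b+1}$ and $N=2a+2b+3$,
\[
\int_0^1 Q(t)\sin(n\pi t)\,\mathrm{d}t=\sum_{m\ge 0}\frac{(-1)^m}{(n\pi)^{2m+1}}\Bigl[Q^{(2m)}(0)-(-1)^nQ^{(2m)}(1)\Bigr],
\]
with $Q^{(2m)}(0)=\binom{2m}{2a+2}\frac{(2a+2)!(2b+1)!}{(N-2m)!}$ and $Q^{(2m)}(1)=-\binom{2m}{2b+1}\frac{(2a+2)!(2b+1)!}{(N-2m)!}$. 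The $m=0$ term vanishes because $Q(0)=Q(1)=0$, so no $\zeta(1)$ appears. Summing over $n$ and multiplying by $2(\pi/2)^{2a+3}$ (from the substitution) and by the prefactor gives exactly $2\sum_{k}(-1)^k c_{a,b}\,\zeta(2k+1)\,\pi^{2(a+b+1-k)}/(2(a+b+1-k)+1)!$.

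There are two small inaccuracies to fix:
\begin{itemize}
\item The Fourier series of $\log\sin x$ does not converge boundedly, since its sum is unbounded at $x=0$. What you need is that its partial sums are dominated by $C+|\log x|$, or simply $L^2$ convergence paired with the bounded function $P'$.
\item The even powers $(n\pi)^{-2m}$ come with $\sin(n\pi t)$ at the endpoints, not $\cos(n\pi t)$; that is why they drop out.
\end{itemize}

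As for what each route buys: yours is short and entirely elementary, but it makes the lemma a corollary of Zagier's theorem. Combined with Section 3, it shows that Lupu's conjecture follows from Zagier's formula; it proves nothing independently of it. The cited derivation does not presuppose Zagier's formula, and in fact yields a new proof of it.
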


\begin{lemma}\label{lemma2.2} (Theorem 4.2 of \cite{[LLO-PAMS]})
We have
\begin{align*}
T(a,b) = \frac{\pi^{2b+1}}{2^{2b+1}(2a+1)!(2b+1)!} \int_{0}^{\pi/2} x^{2a+1}
\left(1 - \frac{2x}{\pi}\right)^{2b+1} \cot(x) \mathrm{d}x.
\end{align*}
\end{lemma}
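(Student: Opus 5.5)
Lemma~\ref{lemma2.2} is quoted from Lai--Lupu--Orr \cite{[LLO-PAMS]}. If I had to prove it here, I would imitate the generating-function method and compare two generating functions in variables $x,y$.

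\textbf{Step 1: left-hand generating function.} Set
$$F(x,y):=\sum_{a,b\ge 0}(-1)^{a+b}T(a,b)\,x^{2a+1}y^{2b+1}.$$
Fix the position $k$ of the entry $3$ and sum over the $2$'s before and after it. This gives
$$F(x,y)=xy\sum_{k\ge1}\frac{1}{(2k-1)^3}\prod_{j<k}\Bigl(1-\frac{x^2}{(2j-1)^2}\Bigr)\prod_{j>k}\Bigl(1-\frac{y^2}{(2j-1)^2}\Bigr).$$
Write the two partial products as ratios of $\Gamma$-values, using $\prod_{j\ge1}\bigl(1-x^2/(2j-1)^2\bigr)=\cos(\pi x/2)$. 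Then $F$ becomes
$$\cos\!\Bigl(\frac{\pi y}{2}\Bigr)\sum_k \frac{1}{(2k-1)^3}\cdot(\text{a ratio of }\Gamma(k\pm x/2)\text{ and }\Gamma(k\pm y/2)\text{ factors}),$$
which is a very-well-poised hypergeometric-type sum.

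\textbf{Step 2: right-hand generating function.} Multiply the right-hand side of Lemma~\ref{lemma2.2} by $(-1)^{a+b}x^{2a+1}y^{2b+1}$ and sum over $a,b$. The factorials $(2a+1)!$ and $(2b+1)!$ turn the two power sums into sines. Concretely, with $u=\pi/2-t$, the right-hand side sums to
$$\int_0^{\pi/2}\sin(xt)\,\sin\!\Bigl(\frac{2yu}{\pi}\cdot\frac{\pi}{2}\Bigr)\cot t\,\mathrm{d}t,$$
up to the normalising power $(\pi/2)^{2b+1}$, which is absorbed by rescaling $y$.

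\textbf{Step 3: evaluate the integral.} Use the partial-fraction expansion
$$\cot t=\sum_{n\in\mathbb Z}\frac{1}{t-n\pi}$$
(alternatively, $\cot t=2\sum_{m\ge1}\sin(2mt)$ in the Abel sense) and integrate term by term. Each term is elementary, an integral of products of sines over $[0,\pi/2]$. This yields a single series in $m$ with rational-trigonometric coefficients in $x,y$ and denominators like $(2m)^2-x^2$ and $(2m)^2-y^2$.

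\textbf{Step 4: identify the two series.} Show that the series from Step~1 equals the series from Step~3. Both are meromorphic in $x$ and $y$ with poles only at odd integers, so I would compare principal parts and growth, in the spirit of Zagier's proof.

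I expect Step~4, the identification of the hypergeometric sum with the trigonometric series, to be the main obstacle. I would first try to settle the case $y=0$, i.e.\ $b=0$, where Lupu's known formula gives a check, and then reduce the general case to it via a difference equation in $y$. An alternative route would be to verify the identity coefficientwise against Murakami's formula \eqref{eq1.1}. That would express the integral as a combination of the values $\zeta(2k+1)$ using
$$\int_0^{\pi/2}t^{m}\cot t\,\mathrm{d}t,$$
which is known in terms of $\log 2$ and odd zeta values.
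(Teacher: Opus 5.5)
\textbf{There is a genuine gap: Step~4 is not carried out.} The paper gives no proof of this lemma; it imports it from Lai--Lupu--Orr. So your proposal has to stand on its own, and all of its content sits in Step~4, which you leave open. The lemma is equivalent to Murakami's evaluation of $T(a,b)$, so ``identify the two series'' amounts to reproving a Zagier-type theorem.

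The mechanism you sketch for Step~4 is also misaimed. Both generating functions are \emph{entire}, not meromorphic:
\begin{itemize}
\item In $G(x,y)=\int_0^{\pi/2}\sin(xt)\sin\bigl(y(\tfrac{\pi}{2}-t)\bigr)\cot t\,\mathrm{d}t$ the integrand is entire in $(x,y)$ and bounded near $t=0$.
\item In $F$ the partial products are bounded by $\cosh(\pi|x|/2)\cosh(\pi|y|/2)$, so the $k$-series converges locally uniformly.
\end{itemize}
There are no poles at odd integers, so there are no principal parts to compare. A Zagier-style uniqueness argument needs explicit exponential-type bounds and an explicit set of points where $F=G$ is already known; you supply neither. ``The case $y=0$'' is vacuous, since $F(x,0)=G(x,0)=0$; the case $b=0$ is the coefficient of $y^1$. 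No difference equation in $y$ is given. (Minor: the denominators in Step~3 are $(2m)^2-(x\pm y)^2$, not $(2m)^2-x^2$ and $(2m)^2-y^2$.)

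\textbf{How to close the gap.} Drop Step~1 and compare Step~3 directly with Murakami's theorem at the level of generating functions.
\begin{enumerate}
\item Justify termwise integration. Use $2\sum_{m\le M}\sin 2mt=(\cos t-\cos(2M+1)t)/\sin t$ and Riemann--Lebesgue; this works because $\sin(xt)\sin(y(\frac{\pi}{2}-t))/\sin t$ is continuous on $[0,\frac{\pi}{2}]$.
\item The elementary integrals then give
\[
G(x,y)=\frac12\sum_{m\ge1}\Bigl[\cos\frac{\pi y}{2}-(-1)^m\cos\frac{\pi x}{2}\Bigr]\Bigl[\frac{4m}{4m^2-(x+y)^2}-\frac{4m}{4m^2-(x-y)^2}\Bigr].
\]
\item For $|x|+|y|<2$, which suffices for comparing Taylor coefficients, expand $\frac{4m}{4m^2-z^2}=\frac1m\sum_{k\ge0}(z/2m)^{2k}$. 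This gives
\[
G(x,y)=\frac12\sum_{k\ge1}\frac{\zeta(2k+1)}{4^{k}}\bigl[(x+y)^{2k}-(x-y)^{2k}\bigr]\Bigl[\cos\frac{\pi y}{2}+\Bigl(1-\frac{1}{4^{k}}\Bigr)\cos\frac{\pi x}{2}\Bigr].
\]
\item This is exactly $\sum_{a,b}(-1)^{a+b}x^{2a+1}y^{2b+1}$ times Murakami's right-hand side. The sum $\sum_a\binom{2k}{2a+1}x^{2a+1}y^{2k-2a-1}$ and its mirror both equal $\frac12[(x+y)^{2k}-(x-y)^{2k}]$. The sums over $t(\{2\}^n)=(\pi/2)^{2n}/(2n)!$ with sign $(-1)^n$ produce the two cosines. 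Comparing coefficients gives the lemma.
\end{enumerate}

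\textbf{Warning if you check against the paper's displays.} Both \eqref{eq1.1} and the $T$-version displayed right after it are off by a factor $2$. At $a=b=0$ the $T$-version gives $\frac74\zeta(3)$, whereas $T(0,0)=t(3)=\frac78\zeta(3)$. The correct form is
\[
T(a,b)=\sum_{k=1}^{a+b+1}(-1)^{k-1}4^{-k}d_{a,b}\,\zeta(2k+1)\,t(\{2\}^{a+b+1-k}),
\]
which is also consistent with $t(2)t(3)=t(2,3)+t(3,2)+t(5)$.

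With this correction, the lemma follows from Murakami's theorem. This reverses the logic of Lai--Lupu--Orr, who derive the integral first and deduce the Zagier/Murakami formula from it. A proof independent of Murakami still requires the identification you left open.
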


\section{Proof of Theorem \ref{theorem1.1}}

In this section, we use the lemmas presented in the  previous
section to show Theorem \ref{theorem1.1}.\\

\noindent {\it Proof of Theorem \ref{theorem1.1}.}
From Lemma \ref{lemma2.1} we know that
\begin{align}\label{eq3.1}
H(a,b)=&\frac{\pi^{2b}2^{2a + 3}}{(2a + 2)!(2b + 1)!}\int_{0}^{\frac{\pi}{2}} x^{2a + 2}
\left(1 - \frac{2}{\pi} x\right)^{2b + 1}\cot(x)\mathrm{d}x.
\end{align}

First, we consider the integral
$$\int_{0}^{\frac{\pi}{2}} x^{2a + 2}\left(1 - \frac{2}{\pi} x\right)^{2b + 1} \cot(x)\mathrm{d}x.$$
By using the well-known identity (see, for example, Theorem 1.2.4 of \cite{[AAR]})
\begin{align}\label{eq3.1'}
x\cot(x)=-2\sum_{n=0}^{\infty}\frac{\zeta(2n)}{\pi^{2n}}x^{2n},
\end{align}
we derive that
\begin{align}\label{eq3.2}
&\int_{0}^{\frac{\pi}{2}}x^{2a+2}\Big(1-\frac{2}{\pi} x\Big)^{2b+1}
\cot(x)\mathrm{d}x\nonumber\\
=&-2\int_{0}^{\frac{\pi}{2}}x^{2a+1}\Big(1-\frac{2}{\pi} x\Big)^{2b+1}
\sum_{n=0}^{\infty}\frac{\zeta(2n)}{\pi^{2n}}x^{2n} \mathrm{d}x \nonumber\\
=&-2\sum_{n=0}^{\infty}\frac{\zeta(2n)}{\pi^{2n}}
\int_{0}^{\frac{\pi}{2}}x^{2n+2a+1}\Big(1-\frac{2}{\pi} x\Big)^{2b+1}\mathrm{d}x.
\end{align}

Second, we consider the integral
$$\int_{0}^{\frac{\pi}{2}}x^{2n+2a+1}\Big(1-\frac{2}{\pi} x\Big)^{2b+1} \, \mathrm{d}x$$
appeared in \eqref{eq3.2}. Changing the variable $x\mapsto \frac{\pi}{2}t$ yields that
\begin{align*}
& \int_{0}^{\frac{\pi}{2}}x^{2n+2a+1}\Big(1-\frac{2}{\pi} x\Big)^{2b+1}\mathrm{d}x\\
=&\int_{0}^{1} \Big(\frac{\pi}{2}\Big)^{2n+2a+2}t^{2n+2a+1}(1-t)^{2b+1}
\, \mathrm{d}t \nonumber\\
=&\Big(\frac{\pi}{2}\Big)^{2n+2a+2}\int_{0}^{1} t^{2n+2a+1}(1-t)^{2b+1}\mathrm{d}t.
\end{align*}
Now by Lemma 2.1, one has
\begin{align*}
&\int_{0}^{1} t^{2n+2a+1}(1-t)^{2b+1}\mathrm{d}t\\
=&B(2n+2a+2,2b+2)\\
=& \frac{\Gamma(2n+2a+2)
\Gamma(2b+2)}{\Gamma(2n+2a+2b+4)}\\
=& \frac{(2n+2a+1)!(2b+1)!}{(2n+2a+2b+3)!}.
\end{align*}
Then we can conclude that
\begin{align}\label{eq3.3}
\int_{0}^{\frac{\pi}{2}}x^{2n+2a+1}\Big(1-\frac{2}{\pi} x\Big)^{2b+1}\mathrm{d}x
=&\Big(\frac{\pi}{2}\Big)^{2n+2a+2}\frac{(2n+2a+1)!(2b+1)!}{(2n+2a+2b+3)!}.
\end{align}
Substituting \eqref{eq3.3} into \eqref{eq3.2}, we have
\begin{align}\label{eq3.4}
&\int_{0}^{\frac{\pi}{2}}x^{2a+2}\Big(1-\frac{2}{\pi} x\Big)^{2b+1}
\cot(x)\mathrm{d}x\nonumber\\
=&-2(2b+1)!\sum_{n=0}^{\infty}\frac{\zeta(2n)}{\pi^{2n}}
\Big(\frac{\pi}{2}\Big)^{2n+2a+2}
\frac{(2n+2a+1)!}{(2n+2a+2b+3)!}\nonumber\\
=&-\frac{2\pi^{2a+2}(2b+1)!}{2^{2a+2}}\sum_{n=0}^{\infty}
\frac{\zeta(2n)(2n+2a+1)!}{2^{2n}(2n+2a+2b+3)!}\nonumber\\
=&-\frac{\pi^{2a+2}(2b+1)!}{2^{2a+1}}\sum_{n=0}^{\infty}
\frac{\zeta(2n)(2n+2a+1)!}{2^{2n}(2n+2a+2b+3)!}.
\end{align}
Finally, putting \eqref{eq3.4} into \eqref{eq3.1}, one gets that
\begin{align*}
& H(a,b)\\
=&-\frac{\pi^{2b}2^{2a+3}}{(2a+2)!(2b+1)!}
\frac{\pi^{2a+2}(2b+1)!}{2^{2a+1}}\sum_{n=0}^{\infty}
\frac{\zeta(2n)(2n+2a+1)!}{2^{2n}(2n+2a+2b+3)!}\nonumber\\
=&-\frac{4\pi^{2a+2b+2}}{(2a+2)!}\sum_{n=0}^{\infty}
\frac{\zeta(2n)}{2^{2n}(2n+2a+2)(2n+2a+3)\ldots(2n+2a+2b+3)}
\end{align*}
as required.

This completes the proof of Theorem \ref{theorem1.1}.
\hfill$\Box$\\

\section{Proof of Theorem \ref{theorem1.2}}
In this section, we use the lemmas presented in Section 2 to give the
proof of Theorem \ref{theorem1.2}.\\

\noindent {\it Proof of Theorem \ref{theorem1.2}.}
From Lemma \ref{lemma2.2}, we know that
\begin{align}\label{eq3.5}
T(a,b)=\frac{\pi^{2b+1}}{2^{2b+1}(2a+1)!(2b+1)!}
\int_{0}^{\pi/2} x^{2a+1} \Big(1-\frac{2x}{\pi}\Big)^{2b+1}\cot(x) \, \mathrm{d}x.
\end{align}

First of all, we compute the integral
$$\int_{0}^{\frac{\pi}{2}} x^{2a+1}\Big(1 - \frac{2}{\pi} x\Big)^{2b+1}\cot(x)\mathrm{d}x.$$
By making use of the identity (3.2), we derive that
\begin{align}\label{eq3.6}
&\int_{0}^{\frac{\pi}{2}}x^{2a+1}\Big(1-\frac{2}{\pi} x\Big)^{2b+1}\cot(x)\mathrm{d}x\nonumber\\
=&-2\int_{0}^{\frac{\pi}{2}}x^{2a}\Big(1-\frac{2}{\pi} x\Big)^{2b+1}
\sum_{n=0}^{\infty}\frac{\zeta(2n)}{\pi^{2n}}x^{2n}\mathrm{d}x \nonumber\\
=&-2\sum_{n=0}^{\infty}\frac{\zeta(2n)}{\pi^{2n}}
\int_{0}^{\frac{\pi}{2}}x^{2n+2a}\Big(1-\frac{2}{\pi} x\Big)^{2b+1}\mathrm{d}x.
\end{align}

In what follows, we calculate the integral
$$\int_{0}^{\frac{\pi}{2}}x^{2n+2a}\Big(1-\frac{2}{\pi} x\Big)^{2b+1}\mathrm{d}x.$$
Letting $t=\frac{2}{\pi}x$ gives that $x=\frac{\pi}{2}$ and $\mathrm{d} x=\frac{\pi}{2}\mathrm{d} t$. Thus
\begin{align}\label{eq3.7}
&\int_{0}^{\frac{\pi}{2}}x^{2n+2a}\Big(1-\frac{2}{\pi} x\Big)^{2b+1}\mathrm{d}x \nonumber\\
=&\int_{0}^{1} \Big(\frac{\pi}{2}\Big)^{2n+2a+1}t^{2n+2a}(1-t)^{2b+1} \mathrm{d}t \nonumber\\
=&\Big(\frac{\pi}{2}\Big)^{2n+2a+1}\int_{0}^{1} t^{2n+2a}(1-t)^{2b+1}\mathrm{d}t. \nonumber\\
\end{align}
Now applying Lemma 2.1 tells us that
\begin{align}\label{eq3.7'}
&\int_{0}^{1} t^{2n+2a}(1-t)^{2b+1}\mathrm{d}t\\
=& B(2n+2a+1,2b+2)\nonumber\\
=& \frac{\Gamma(2n+2a+1)\Gamma(2b+2)}{\Gamma(2n+2a+2b+3)}\nonumber\\
=&\frac{(2n+2a)!(2b+1)!}{(2n+2a+2b+2)!}.
\end{align}
Hence
\begin{align}\label{eq3.7}
&\int_{0}^{\frac{\pi}{2}}x^{2n+2a}\Big(1-\frac{2}{\pi} x\Big)^{2b+1}\mathrm{d}x
=\Big(\frac{\pi}{2}\Big)^{2n+2a+1}\frac{(2n+2a)!(2b+1)!}{(2n+2a+2b+2)!}.
\end{align}
Substituting \eqref{eq3.7} into \eqref{eq3.6}, we have
\begin{align}\label{eq4.6}
&\int_{0}^{\frac{\pi}{2}}x^{2a+1}\Big(1-\frac{2}{\pi} x\Big)^{2b+1}\cot(x) \mathrm{d}x\nonumber\\
=&-2\sum_{n=0}^{\infty}\frac{\zeta(2n)}{\pi^{2n}}\Big(\frac{\pi}{2}\Big)^{2n+2a+1}
\frac{(2n+2a)!(2b+1)!}{(2n+2a+2b+2)!}\nonumber\\
=&-\frac{2\pi^{2a+1}(2b+1)!}{2^{2a+1}}
\sum_{n=0}^{\infty}\frac{\zeta(2n)(2n+2a)!}{2^{2n}(2n+2a+2b+2)!}\nonumber\\
=&-\frac{\pi^{2a+1}(2b+1)!}{2^{2a}}
\sum_{n=0}^{\infty}\frac{\zeta(2n)(2n+2a)!}{2^{2n}(2n+2a+2b+2)!}.
\end{align}
Therefore by (4.1) and (4.6), one deduces that
\begin{align*}
& T(a,b)\nonumber\\
=&\frac{\pi^{2b+1}}{2^{2b+1}(2a+1)!(2b+1)!}\int_{0}^{\frac{\pi}{2}} x^{2a + 11}
\left(1 - \frac{2}{\pi} x\right)^{2b+1}\cot(x)\mathrm{d}x\nonumber\\
=&-\frac{\pi^{2b+1}}{2^{2b+1}(2a+1)!(2b+1)!}
\frac{\pi^{2a+1}(2b+1)!}{2^{2a}}
\sum_{n=0}^{\infty}\frac{\zeta(2n)(2n+2a)!}{2^{2n}(2n+2a+2b+2)!}\nonumber\\
=&-\frac{2}{(2a+1)!}\Big(\frac{\pi}{2}\Big)^{2a+2b+2}
\sum_{n=0}^{\infty}\frac{\zeta(2n)}{(2n+2a+1)(2n+2a+2)\ldots(2n+2a+2b+3)2^{2n}}
\end{align*}
as desired.

This concludes the proof of Theorem \ref{theorem1.2}. \hfill$\Box$\\

 \end{document}